\documentclass[preprint]{elsarticle}

\usepackage{mathtools,amsfonts}

\bibliographystyle{elsarticle-harv}

\newtheorem{mytheorem}{Theorem}
\newdefinition{mydef}{Definition}
\newtheorem{mylemma}{Lemma}
\newproof{proof}{Proof}

\begin{document}
\begin{frontmatter}
\title{Fa\`a di Bruno's formula for chain differentials}

\author{Daniel~E.~Clark\footnote{Corresponding author.
{\texttt d.e.clark@hw.ac.uk}
School of Engineering and Physical Sciences, Heriot-Watt University, Edinburgh, UK. +44 131 451 3323.}
and~Jeremie~Houssineau}

\begin{abstract}
This paper determines the general formula for describing differentials of composite functions in terms of differentials of their factor functions.
This generalises the formula commonly attributed to Fa\`{a} di Bruno to functions in locally convex topological vector spaces.
The result highlights the general structure of the higher-order chain rule in terms of partitions of the directions.
\end{abstract}

\end{frontmatter}

\section{Introduction}
Mathematicians have investigated formulae for expressing higher-order derivatives of composite functions in terms of derivatives of their factor functions for over 200 years.
These formulae are often attributed to Fa\`{a} di Bruno~\cite{faa2}, though Craik~\cite{Craik} recently highlighted a number of researchers preceding his works, the earliest of which is thought to be by Arbogast~\cite{arbo}.
Despite the fact that the idea of expressing these formulae in terms of derivatives of the factor functions is not new, a number of recent works have appeared on this topic, including those by Hardy~\cite{hardy} and Ma~\cite{ma} on partial derivatives, and an alternative approach was presented by Huang {\it et al.} for Fr\'echet derivatives \cite{huang}, though these forms do not maintain the general structure.
The result presented in this paper has a general multi-linear structure that does not necessarily factorise into the usual product form.

\section{Theory}

In this section we discuss two different forms of differential, the G\^{a}teaux differential~\cite{Gateaux} and the chain differential~\cite{bernhard}.
The chain differential, which is similar to the epiderivative~\cite{AubinFran}, is adopted since it is possible to determine a chain rule, yet is not as restrictive as the Fr\'echet derivative.

\subsection{G\^ateaux differentials}

The following two definitions describe the G\^ateaux differential and its $n^{th}$-order differential.

\begin{mydef}[G\^ateaux differential]
Let $X$ and $Y$ be locally convex topological vector spaces, and
let $\Omega$ be an open subset of $X$ and let $f:\Omega\rightarrow Y$. The G\^ateaux differential at $x\in \Omega$ in the direction $\eta\in X$ is
\begin{equation}
\delta f(x;\eta) = \lim_{\epsilon\to 0} \frac{1}{\epsilon}\left( f(x+\epsilon \eta) - f(x) \right)
\end{equation}
when the limit exists. If $\delta f(x;\eta)$ exists for all $\eta\in X$ then $f$ is G\^ateaux differentiable at $x$. 
The G\^ateaux differential is homogeneous of degree one in $\eta$, so that for all real numbers $\alpha$, $\delta f(x;\alpha \eta)=\alpha\delta f(x;\eta)$.
\end{mydef}

\begin{mydef}[$n^{th}$-order G\^ateaux differential]
The {\it $n^{th}$-order variation} of $f(x)$ in directions $\eta_1,\ldots,\eta_n\in X$ is defined recursively with
\begin{equation*}
\delta^n f\left(x;\eta_1,\ldots,\eta_n\right) =
\delta\left(\delta^{n-1} f\left(x;\eta_1,\ldots,\eta_{n-1}\right); \eta_n\right).
\end{equation*}
\end{mydef}

\subsection{Chain differentials}

Due to the lack of continuity properties of the G\^ateaux differential, further constraints are required in order to derive a chain rule.
Bernhard \cite{bernhard} proposed a new form of G\^ateaux differential defined with sequences, which he called the chain differential, that is not as restrictive as the Fr\'echet derivative though it is still possible to find a chain rule that maintains the general structure.

\begin{mydef}[Chain differential]
The function $f:X\rightarrow Y$, where $X$ and $Y$ are locally convex topological vector spaces, has a {\it chain differential} $\delta f(x;\eta)$ at $x$ in the direction $\eta$ if, for any sequence $\eta_n\rightarrow\eta\in X$, and any sequence of real numbers $\theta_n\rightarrow 0$, it holds that the following limit exists
\begin{equation*}
\delta f(x;\eta) = \lim_{n\rightarrow \infty} \dfrac{1}{\theta_n} \left( f(x+\theta_n\eta_n)-f(x) \right).
\end{equation*}
\end{mydef}

\begin{mylemma}[Chain rule, from \cite{bernhard}, Theorem 1]
\label{lemma:chainRule}
Let $X$, $Y$ and $Z$ be 
 locally convex topological vector spaces, $f : Y \rightarrow Z$ , $g : X\rightarrow Y$ and $g$ and $f$ have chain differentials at $x$ in the direction $\eta$ and at $g(x)$ in the direction $\delta g(x;\eta)$ respectively. Let $h = f \circ g$, then $h$ has a chain differential at $x$ in the direction $\eta$, given by the chain rule
\begin{equation*}
\delta h(x;\eta) = \delta f(g(x); \delta g(x;\eta)).
\end{equation*}
\end{mylemma}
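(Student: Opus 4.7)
The plan is to unwrap the definition of the chain differential for $h$ directly. I would fix an arbitrary sequence $\eta_n \to \eta$ in $X$ and an arbitrary sequence of real numbers $\theta_n \to 0$, and then show that the limit
\[
\lim_{n\to\infty}\frac{1}{\theta_n}\bigl(h(x+\theta_n\eta_n) - h(x)\bigr)
\]
exists and equals $\delta f(g(x);\delta g(x;\eta))$.

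The key algebraic step is to introduce the auxiliary sequence
\[
\eta'_n = \frac{g(x+\theta_n\eta_n) - g(x)}{\theta_n},
\]
so that, substituting into the definition of $h = f\circ g$, the difference quotient becomes
\[
\frac{1}{\theta_n}\bigl(f(g(x)+\theta_n\eta'_n) - f(g(x))\bigr).
\]
At this point the proof is reduced to two independent applications of the chain differential definition. First, since $g$ admits a chain differential at $x$ in direction $\eta$ and the pair $(\eta_n,\theta_n)$ satisfies the required convergence, we obtain $\eta'_n \to \delta g(x;\eta)$ in $Y$. Second, since $f$ admits a chain differential at $g(x)$ in direction $\delta g(x;\eta)$, the definition applied to the sequences $\eta'_n \to \delta g(x;\eta)$ and $\theta_n \to 0$ yields that the rewritten difference quotient converges to $\delta f(g(x);\delta g(x;\eta))$.

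The main subtlety, which is the only real obstacle, is conceptual rather than technical: one must recognise that the chain differential is specifically designed so that an \emph{arbitrary} null sequence $\theta_n$ paired with an \emph{arbitrary} convergent direction sequence $\eta'_n$ produces the limit, which is exactly what allows us to feed the output sequence of $g$ as the input direction sequence for $f$. This is precisely the feature that fails for the plain G\^ateaux differential, and it is why the definition through sequences is strong enough to support the chain rule while remaining weaker than Fr\'echet differentiability. Once this observation is in place, the two limits chain together without any further estimates, and no topological assumption beyond those already encoded in the hypotheses is needed.
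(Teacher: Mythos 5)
Your argument is correct and is essentially the standard proof from Bernhard's Theorem 1, which the paper cites rather than reproves; the same device (the auxiliary sequence $\eta'_n = \theta_n^{-1}(g(x+\theta_n\eta_n)-g(x))$, so that $g(x+\theta_n\eta_n) = g(x) + \theta_n\eta'_n$ and the output difference quotients of $g$ become admissible input direction sequences for $f$) reappears verbatim in the paper's proof of the higher-order chain rule as the sequence $\phi_m$. Your identification of the sequence-based definition as the precise feature that makes the composition of limits legitimate is exactly the right point, and no further estimates are needed.
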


In order to determine the general higher-order chain rule, it is useful to introduce the partial and total chain differentials as follows.

\begin{mydef}[Partial chain differential]
Let $\{X_i\}_{i=1:n}$ and $Y$ be  locally convex topological vector spaces. The function $f:X_1\times\ldots\times X_n \rightarrow Y$ has a {\it partial chain differential} with respect to the $i^{th}$ variable $\delta_i f(x_1,\ldots,x_n;\eta)$ at $(x_1,\ldots,x_n)$ in the direction $\eta$ if, for any sequence $\eta_m\rightarrow\eta\in X$, and any sequence of real numbers $\theta_m\rightarrow 0$, it holds that
the following limit exists
\begin{align*}
&\delta_i f(x_1,\ldots,x_n;\eta) = 
\lim_{m\rightarrow \infty} \dfrac{1}{\theta_m} \left( f(x_1,\ldots,x_i+\theta_m\eta_m,\ldots,x_n)-f(x_1,\ldots,x_n) \right).
\end{align*}
\end{mydef}

\begin{mytheorem}[Total chain differential]
\label{thm:totChainDiff}
Let $\{X_i\}_{i=1:n}$ and $Y$ be  locally convex topological vector spaces. The function $f:X_1\times\ldots\times X_n \rightarrow Y$ has a {\it total chain differential} $\delta f$ at $(x_1,\ldots,x_n)$ if 
\begin{enumerate}
\item all the partial chain differentials exist in a neighbourhood $\Omega \subseteq X_1\times\ldots\times X_n$ of $(x_1,\ldots,x_n)$ and in any direction, and
\item
 $\delta_i f$ is continuous over $\Omega\times X_i$. 
\end{enumerate}
Then for $\underline{\eta} \in X_1\times\ldots\times X_n$ such that $\underline{\eta} = (\eta_1,\ldots,\eta_n)$,
\begin{equation*}
\delta f(x_1,\ldots,x_n;\underline{\eta}) = \sum_{i=1}^n \delta_i f(x_1,\ldots,x_n;\eta_i).
\end{equation*}
\end{mytheorem}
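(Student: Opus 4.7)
The plan is to fix arbitrary sequences $\eta_1^{(m)}\to\eta_1,\ldots,\eta_n^{(m)}\to\eta_n$ in $X_1,\ldots,X_n$ and $\theta_m\to 0$ in $\mathbb{R}$, and to compute
\[
\lim_{m\to\infty}\frac{1}{\theta_m}\bigl[f(x_1+\theta_m\eta_1^{(m)},\ldots,x_n+\theta_m\eta_n^{(m)})-f(x_1,\ldots,x_n)\bigr],
\]
showing that the limit exists and equals $\sum_{i=1}^n\delta_i f(x_1,\ldots,x_n;\eta_i)$. This will simultaneously give existence of the total chain differential $\delta f(x_1,\ldots,x_n;\underline{\eta})$ and the claimed additive formula.

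The first step is a telescoping decomposition through the intermediate points
\[
y_i^{(m)} = (x_1+\theta_m\eta_1^{(m)},\ldots,x_i+\theta_m\eta_i^{(m)},x_{i+1},\ldots,x_n),
\]
so that $y_0^{(m)}=(x_1,\ldots,x_n)$, $y_n^{(m)}$ is the perturbed argument of $f$ above, and $y_i^{(m)}$ differs from $y_{i-1}^{(m)}$ only in the $i$-th coordinate (by $\theta_m\eta_i^{(m)}$). Writing the numerator as $\sum_{i=1}^n[f(y_i^{(m)})-f(y_{i-1}^{(m)})]$ reduces the problem to showing, for each $i$, that $\theta_m^{-1}[f(y_i^{(m)})-f(y_{i-1}^{(m)})]\to\delta_i f(x_1,\ldots,x_n;\eta_i)$ as $m\to\infty$, where crucially the base point $y_{i-1}^{(m)}$ itself converges to $(x_1,\ldots,x_n)$.

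To handle each summand, I would view $\phi_i^{(m)}(t)=f(y_{i-1}^{(m)}+t\mathbf{e}_i\eta_i^{(m)})$ (with $\mathbf{e}_i\eta_i^{(m)}$ denoting the tuple placing $\eta_i^{(m)}$ in slot $i$ and zero elsewhere) as a $Y$-valued function of a single real variable $t$ on $[0,\theta_m]$. Since the partial chain differentials exist throughout $\Omega$, Lemma~\ref{lemma:chainRule} applied to the embedding $t\mapsto y_{i-1}^{(m)}+t\mathbf{e}_i\eta_i^{(m)}$ shows that $\phi_i^{(m)}$ has a chain differential in $t$ equal to $\delta_i f(y_{i-1}^{(m)}+t\mathbf{e}_i\eta_i^{(m)};\eta_i^{(m)})$. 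A mean-value or fundamental-theorem-of-calculus argument then expresses $\phi_i^{(m)}(\theta_m)-\phi_i^{(m)}(0)$ as $\theta_m$ times an intermediate value (or integral average) of $\delta_i f$ along the segment joining $y_{i-1}^{(m)}$ and $y_i^{(m)}$. Since that segment collapses to $(x_1,\ldots,x_n)$ and $\eta_i^{(m)}\to\eta_i$, continuity of $\delta_i f$ on $\Omega\times X_i$ forces the intermediate values to converge to $\delta_i f(x_1,\ldots,x_n;\eta_i)$, which is exactly the desired limit.

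The main obstacle I anticipate is the mean-value step: in a general locally convex $Y$, a finite-increment identity relating $\phi_i^{(m)}(\theta_m)-\phi_i^{(m)}(0)$ to its chain differential is not automatic and is the only place where the continuity hypothesis on all of $\Omega$ (rather than merely at $(x_1,\ldots,x_n)$) is essential. The argument will need either sequential completeness of $Y$ so that a Riemann-type integral is available, or a reduction via continuous seminorms and Hahn--Banach to the scalar case. Once that representation is in hand, telescoping and the pointwise limits of the individual terms deliver both the existence of $\delta f(x_1,\ldots,x_n;\underline{\eta})$ and the claimed additive decomposition.
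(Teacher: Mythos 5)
Your proposal follows essentially the same route as the paper's proof: a telescoping decomposition through intermediate points (the paper does the case $n=2$ with $g_1,g_2$), a mean-value argument on the one-real-variable restriction of each increment to express it as a partial chain differential at a nearby point, and the continuity of $\delta_i f$ to pass to the limit. You are in fact more careful than the paper about the one delicate step --- the paper invokes the mean value theorem ``for real-valued functions'' even though the increments are $Y$-valued --- so the remedies you flag (reduction via seminorms and Hahn--Banach, or a Riemann-type integral under sequential completeness) address a point the paper glosses over rather than a defect in your own argument.
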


\begin{proof}
The result is proved in the case $n=2$ from which the general case can be straightforwardly deduced:
\begin{align*}
\delta f\left(x,y; (\eta,\xi)\right) 
& = \lim_{r\rightarrow \infty} \theta^{-1}_r\left[ f(x+\theta_r \eta_r,y + \theta_r \xi_r) - f(x,y) \right]\\
& = \lim_{r\rightarrow \infty} \big(\theta^{-1}_r\left[g_1(y + \theta_r\eta_r) - g_1(y) \right] + \theta^{-1}_r\left[g_2(x+\theta_r \eta_r) - g_2(x)\right] \big),
\end{align*}
where we define $g_1(y)$ and $g_2(x)$ as follows:
\begin{equation*}
\left\{
\begin{array}{l}
g_1(y) = f(x+\theta_r \eta_r,y),\\
\\
g_2(x) = f(x,y).
\end{array}
\right.
\end{equation*}

Given $\theta_r \neq 0$, define $h:\mathbb{R} \rightarrow \mathbb{R}$ as $h(t) = g_1(y + t\xi_r)$. From the mean value theorem for real-valued functions, there exists $c_y \in [0,\theta_r]$ such that 
\begin{equation*}
\theta_r^{-1}\left[h(\theta_r) - h(0)\right] = \left.\dfrac{\mathrm{d} h}{\mathrm{d} t}\right|_{t = c_y} = \delta h(c_y;1),
\end{equation*}
which, when replacing $h(t)$ by $g_1(y + t\xi_r)$, can be rewritten
\begin{align*}
\theta_r^{-1}\left[g_1(y + \theta_r\xi_r) - g_1(y)\right] & = \delta \left(g_1(y + c_y\xi_r);1\right)\\
& = \delta g_1(y + c_y \xi_r; \xi_r),
\end{align*}
where Lemma \ref{lemma:chainRule} has been used. 
Similarly for $g_2(x)$, there exists $c_x \in [0,\theta_r]$ such that
\begin{equation*}
\theta^{-1}_r\left[g_2(x+\theta_r \eta_r) - g_2(x)\right] = \delta g_2(x+c_x \eta_r; \eta_r).
\end{equation*}

The last step in the proof is to demonstrate that the limit of the following term
\begin{equation}
\label{eq:lastStep}
\big| \delta g_2(x+c_x \eta_r;\eta_r) + \delta g_1(y + c_y \xi_r;\xi_r)
- \delta_1 f\left(x,y; \eta_r\right) - \delta_2 f\left(x,y; \xi_r\right) \big|,
\end{equation}
is equal to $0$ when $r\rightarrow\infty$.
By the triangle inequality, (\ref{eq:lastStep}) is bounded above by the following summation
\begin{equation*}
\notag\big| \delta g_2(x+c_x \eta_r;\eta_r) - \delta_1 f\left(x,y; \eta_r\right) \big| + \big| \delta g_1(y + c_y \xi_r;\xi_r) - \delta_2 f\left(x,y; \xi_r\right) \big|.
\end{equation*}
Substituting $g_1$ and $g_2$ with $f$, we have
\begin{align}
\big| \delta_1 f\left(x+c_x \eta_r, y; \eta_r\right)  - \delta_1 f\left(x,y; \eta_r\right) \big| + \big| \delta_2 f\left( x ,y + c_y \xi_r; \xi_r\right) - \delta_2 f\left(x,y; \xi_r\right) \big|,
\notag
\end{align}
which tends to $0$ 
when $r\rightarrow\infty$
because of the continuity of $\delta_1 f$ and $\delta_2 f$.

The following result is then proved:
\begin{equation*}
\delta f\left(x,y; (\eta,\xi)\right) = \delta_1 f\left(x,y; \eta\right) + \delta_2 f\left(x,y; \xi\right)
\end{equation*}
which is equivalent to the Proposition $3$ in \cite{bernhard}.
\end{proof}

\begin{mytheorem}[General higher-order chain rule]
Let $X$, $Y$ and $Z$ be  locally convex topological vector spaces.
Assume that $g:X\rightarrow Y$ has higher order chain differentials in any number of directions in the set $\{\eta_1, \ldots, \eta_n\}\in X^n$ and that $f:Y\rightarrow Z$ has higher order chain differentials in any number of directions in the set $\{\delta^m g(x;S_m)\}_{m=1:n}$, $S_m \subseteq \{ \eta_1, \ldots, \eta_n \}$.
Assume additionally that for all $1\leq m \leq n$, $\delta^m f(y;\xi_1,\ldots,\xi_m)$ is continuous on an open set $\Omega \subseteq Y^{m+1}$ and linear with respect to the directions $\xi_1,\ldots,\xi_m$, the $n^{th}$-order variation of composition $f\circ g$ in directions $\eta_1, \ldots, \eta_n$ at point $x\in X$ is given by
\begin{align*}
\delta^n (f\circ g)(x; \eta_1,&\ldots,\eta_n ) = 
\sum_{\pi\in \Pi(\eta_{1:n})} \delta^{|\pi|}f \left( g(x); \xi_{\pi_1}(x),\ldots,\xi_{\pi_{|\pi|}}(x) \right),
\end{align*}
where $\xi_{\omega}(x) = \delta^{|\omega|} g \left( x; \omega_1,\ldots,\omega_{|\omega|} \right)$ is the $|\omega|^{th}$-order chain differential of $g$ in directions $\{\omega_1,\ldots,\omega_{|\omega|}\} \subseteq \{ \eta_1, \ldots, \eta_n \} $. $\Pi(\eta_{1:n})$ represents the set of partitions of the set $\{\eta_1,\ldots,\eta_n\}$ and $|{\pi}|$ denotes the cardinality of the set $\pi$.
\end{mytheorem}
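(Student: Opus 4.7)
The plan is to proceed by induction on the order $n$. The base case $n=1$ is immediate from Lemma~\ref{lemma:chainRule}, since the only partition of $\{\eta_1\}$ is $\{\{\eta_1\}\}$, and the right-hand side reduces to $\delta f(g(x);\delta g(x;\eta_1))$.

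For the inductive step, I would assume the formula holds for $n-1$ and expand
\begin{equation*}
\delta^n(f\circ g)(x;\eta_1,\ldots,\eta_n) = \delta\bigl(\delta^{n-1}(f\circ g)(x;\eta_1,\ldots,\eta_{n-1});\eta_n\bigr)
\end{equation*}
using the recursive definition. Substituting the induction hypothesis and exploiting linearity of the chain differential over finite sums (which follows directly from the definition) yields a sum over $\pi\in\Pi(\eta_{1:n-1})$, each summand being the chain differential in direction $\eta_n$ of the map $x \mapsto \delta^{|\pi|}f(g(x);\xi_{\pi_1}(x),\ldots,\xi_{\pi_{|\pi|}}(x))$. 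I would then introduce $F(y,\zeta_1,\ldots,\zeta_{|\pi|}) = \delta^{|\pi|}f(y;\zeta_1,\ldots,\zeta_{|\pi|})$ and view each summand as the chain differential of the composition $x \mapsto F(g(x),\xi_{\pi_1}(x),\ldots,\xi_{\pi_{|\pi|}}(x))$. Lemma~\ref{lemma:chainRule} combined with Theorem~\ref{thm:totChainDiff} rewrites it as
\begin{equation*}
\delta_0 F(\ldots;\delta g(x;\eta_n)) + \sum_{j=1}^{|\pi|} \delta_j F(\ldots;\delta\xi_{\pi_j}(x;\eta_n)).
\end{equation*}

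The partial $\delta_0 F$ with respect to $y$ equals $\delta^{|\pi|+1}f(g(x);\xi_{\pi_1}(x),\ldots,\xi_{\pi_{|\pi|}}(x),\delta g(x;\eta_n))$ by the recursive definition, which corresponds to adjoining the singleton block $\{\eta_n\}$ to $\pi$. The partial $\delta_j F$ with respect to $\zeta_j$ collapses, by the assumed linearity of $\delta^{|\pi|}f$ in its $j$-th direction slot, to substitution of $\delta\xi_{\pi_j}(x;\eta_n) = \xi_{\pi_j\cup\{\eta_n\}}(x)$ into that slot, which corresponds to inserting $\eta_n$ into the block $\pi_j$. The combinatorial step is then the classical bijection by which every partition of $\{\eta_1,\ldots,\eta_n\}$ arises uniquely either by adjoining $\{\eta_n\}$ as a new block to a partition of $\{\eta_1,\ldots,\eta_{n-1}\}$ or by inserting $\eta_n$ into one of its blocks, delivering the formula at order $n$.

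The main obstacle will be the careful verification that the regularity hypotheses transport through the induction so that Theorem~\ref{thm:totChainDiff} legitimately applies to the auxiliary map $F$: the assumed continuity of $\delta^m f$ on $\Omega$ must yield continuity of each partial $\delta_j F$ on a suitable open set, and the assumed higher-order chain differentiability of $g$ in all relevant direction sets must supply $\delta g(x;\eta_n)$ and $\delta\xi_{\pi_j}(x;\eta_n)$ at each step. A secondary subtlety is the dependence of the formula on the listing order of the blocks $\pi_1,\ldots,\pi_{|\pi|}$, which must be addressed so that the partition sum and the \emph{last-direction} convention of the recursive definition remain mutually compatible.
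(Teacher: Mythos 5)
Your proposal follows essentially the same route as the paper's own proof: induction via the recursive definition, linearity of the chain differential over the partition sum, the auxiliary multivariate map $F(y,\zeta_1,\ldots,\zeta_{|\pi|})=\delta^{|\pi|}f(y;\zeta_1,\ldots,\zeta_{|\pi|})$ combined with Theorem~\ref{thm:totChainDiff} and Lemma~\ref{lemma:chainRule} to split the differential into the first-slot term (new singleton block) and the linear-slot terms (insertion of $\eta_n$ into an existing block), and the standard Bell-number bijection on partitions. The subtleties you flag at the end (transport of the regularity hypotheses to $F$ and the block-ordering convention) are real but are treated no more carefully in the paper itself, so your plan is a faithful match.
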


\begin{proof}
Lemma \ref{lemma:chainRule} gives the base case $n=1$.
For the induction step, we apply the differential operator to the case $n$ to give the case $n+1$ and show that it involves a summation over partitions of elements $\eta_1,\ldots,\eta_{n+1}$ in the following way
\begin{equation}
\label{eq:faaProof1}
\delta^{n+1} (f\circ g)(x; \eta_1,\ldots,\eta_{n+1}) = 
\sum_{\pi\in \Pi(\eta_{1:n})} \delta \left( \delta^{|{\pi}|}f \left( g(x); \xi_{\pi_1}(x),\ldots,\xi_{\pi_{|{\pi}|}}(x) \right); \eta_{n+1} \right).
\end{equation}

The main objective in this proof is to calculate the term
\begin{equation}
\label{eq:faaProof2}
\delta \left( \delta^k f \left( g(x); h_1(x),\ldots,h_k(x) \right); \eta \right).
\end{equation}

The additional differentiation with respect to $\eta$ applies to every function on $X$, i.e. to $g$ and to the $h_i$, where $1\leq i\leq k$.
To highlight the structure of this result, we can define a multi-variate function $F$ such that
\begin{align*}
F : Y^{k+1} & \to Z \\
(y_0,\ldots,y_k) &\mapsto \delta^k f(y_0; y_1,\ldots,y_k),
\end{align*}
so that (\ref{eq:faaProof2}) can be rewritten $\delta\left(F(g(x),h_1(x),\ldots,h_k(x)); \underline{\eta}^{(k+1)} \right)$, when denoting $\underline{\eta}^{(k)} = (\eta,\ldots,\eta) \in X^k$. Using Theorem \ref{thm:totChainDiff} we find
\begin{equation}
\label{eq:faaProof3}
\delta\left(F(g(x),h_1(x),\ldots,h_k(x)); \underline{\eta}^{(k+1)} \right) = \sum_{i=1}^{k+1} \delta_i\left(F(g(x),h_1(x),\ldots,h_k(x)); \eta \right).
\end{equation}

From this point, differentiation with respect to $g(x)$ has to be dealt with separately due to the different properties of $F$ with respect to its arguments.
\begin{itemize}
\item Consider the first term of the 
summation in (\ref{eq:faaProof3}):
\begin{equation}
\label{eq:firstTerm}
\delta_1\left(F(g(x),h_1(x),\ldots,h_k(x)); \eta \right).
\end{equation}

Let $\theta_m \to 0$, $\eta^{(m)} \to \eta$, and let $\phi_m$ be defined as
\begin{equation*}
\phi_m(x) = \theta_m^{-1}(g(x+\theta_m \eta^{(m)})-g(x)).
\end{equation*}
Following Theorem $1$ in Bernhard~\cite{bernhard}, we have
\begin{equation}
\label{eq:chainRuleTool}
\left\{
\begin{array}{l}
\phi_m(x) \to \delta g(x;\eta),\\
\\
g(x+\theta_m\eta^{(m)})=g(x) + \theta_m \phi_m(x).
\end{array}
\right.
\end{equation}
One can then rewrite (\ref{eq:firstTerm}) as the limit when $m\rightarrow\infty$ 
of
\begin{equation*}
\theta_m^{-1}\left( F(g(x) + \theta_m\phi_m(x),h_1(x),\ldots,h_k(x)) - F(g(x),h_1(x),\ldots,h_k(x)) \right),
\end{equation*}
which, when taking the limit and using (\ref{eq:chainRuleTool}), can be expressed as
\begin{equation}
\label{eq:finalStep1}
\delta_1\left(F(g(x),h_1(x),\ldots,h_k(x)); \eta \right) = \delta_1 F(g(x),h_1(x),\ldots,h_k(x); \delta g(x,\eta)).
\end{equation}

\item Now consider all the other terms in (\ref{eq:faaProof3}):
\begin{equation}
\label{eq:otherTerm}
\delta_i\left(F(g(x),h_1(x),\ldots,h_k(x)); \eta \right), \qquad 2 < i \leq k+1.
\end{equation}

Let $\theta_m \to 0$ and $\eta^{(m)} \to \eta$. The terms in (\ref{eq:otherTerm}) can be expressed as the limit when $m\rightarrow\infty$  of
\begin{equation*}
\theta_m^{-1}\big( F(g(x),h_1(x),\ldots,h_i(x + \theta_m \eta^{(m)}),\ldots,h_k(x)) - F(g(x),h_1(x),\ldots,h_k(x)) \big).
\end{equation*}
However,
due to the linearity of $F$ with respect to all its arguments except the first, the previous expression can be simplified and written as
\begin{equation*}
F(g(x),h_1(x),\ldots,\theta_m^{-1}\big(h_i(x+\theta_m\eta^{(m)})-h_i(x)\big),\ldots,h_k(x)).
\end{equation*}
Taking the limit, this becomes
\begin{equation}
\label{eq:finalStep2}
\delta_i\left(F(g(x),h_1(x),\ldots,h_k(x)); \eta \right) = F(g(x),h_1(x),\ldots,\delta h_i(x,\eta),\ldots,h_k(x)).
\end{equation}
\end{itemize}

Considering $k = |\pi|$, $\eta=\eta_{n+1}$ and $h_i = \xi_{\pi_i}$ and replacing the results (\ref{eq:finalStep1}) and (\ref{eq:finalStep2}) into (\ref{eq:faaProof1}), we find
\begin{multline*}
\delta^{n+1} (f\circ g)(x; \eta_1,\ldots,\eta_{n+1}) = \\
\sum_{\pi\in \Pi(\eta_{1:n})} \delta^{|{\pi}|+1}f \left( g(x); \xi_{\pi_1}(x),\ldots,\xi_{\pi_{|{\pi}|}}(x), \delta g(x,\eta_{n+1}) \right) \\
+ \sum_{\pi\in \Pi(\eta_{1:n})} \sum_{i=1}^{|\pi|} \delta^{|{\pi}|} f \left( g(x); \xi_{\pi_1}(x),\ldots,\delta\xi_{\pi_i}(x;\eta_{n+1}),\ldots,\xi_{\pi_{|{\pi}|}}(x) \right).
\end{multline*}

Following a similar argument used for the recursion of Stirling numbers of the second kind and their relation to Bell numbers~\cite[p74]{RStanleyV1}, the result above can be viewed as a means of generating all partitions of $n+1$ elements from all partitions of $n$ elements:
The first term corresponds to the creation of a new subset only containing $\eta_{n+1}$, and each term in the second summation appends $\eta_{n+1}$ to one of the existing subset in $\pi \in \Pi(\eta_{1:n})$.
This argument follows similar arguments previously used for ordinary and partial derivatives~\cite{ma,huang,hardy}.
Hence the result is proved by induction.
\end{proof}

\section{Discussion}
It is worth highlighting the structure of the result.
In other forms of chain rule, Fa\`a di Bruno's formula is a sum over partitions of products.
However, in the general form for variational calculus, the outer functional has variations in directions that themselves are differentials of the inner functional.

\subsection{Conclusions}
This study determined a general form of Fa\`a di Bruno's formula based on Bernhard's chain differential.

\bibliography{bibHierSys}

\end{document}